\newcommand{\G}{\mathcal G}
\newcommand{\A}{\mathcal A}
\newcommand{\Z}{\mathbb Z}
\newcommand{\Sym}{\mathop{\rm Sym}\nolimits}
\newcommand{\Xd}{X_{\diamond}}
\newcommand{\Cay}{\mathop{\rm Cay}\nolimits}
\newtheorem{theorem}{Theorem}[section]
\newtheorem{lemma}[theorem]{Lemma}
\newtheorem{definition}{Definition}
\newtheorem{question}{Question}
\begin{document}

\title{An Example of an Automatic Graph of Intermediate Growth}

\author{Alexei Miasnikov\footnote{Partially Supported by the Marsden Fund of The Royal Society of New Zealand.}\\
               Department of Mathematical Sciences,\\
               Stevens Institute of Technology,\\
               Castle Point, Hoboken, NJ, 07030\\
               \href{mailto:amyasnik@stevens.edu}{amyasnik@stevens.edu}\\
               \and
        Dmytro Savchuk\footnote{Partially Supported by the New Researcher Grant from University of South Florida.}\\
               Department of Mathematics and Statistics\\
               University of South Florida\\
               4202 E Fowler Ave\\
               Tampa, FL 33620-5700\\
               \href{mailto:savchuk@usf.edu}{savchuk@usf.edu}
}

\maketitle

\begin{abstract}
We give an example of a 4-regular infinite automatic graph of intermediate growth. It is constructed as a Schreier graph of a certain group generated by 3-state automaton. The question was motivated by an open problem on the existence of Cayley automatic groups of intermediate growth.
\end{abstract}

%\begin{keyword}
%automatic graph\sep automatic structure\sep Cayley automatic group\sep intermediate growth\sep Schreier graph
%\MSC 68Q45\sep 20F65
%\end{keyword}

\section*{Introduction}
Automatic groups were formally introduced by Thurston in 1986 motivated by earlier results of Cannon~\cite{cannon:combinatorial_structure84} on properties of Cayley graphs of hyperbolic groups. The latter results, in turn, were motivated by the pioneering work of Dehn on word problem in surface groups. All automatic groups have solvable in a quadratic time word problem and have at most quadratic Dehn function. If, in addition, a group is bi-automatic, then it has solvable conjugacy problem. For survey on the main results about the class of automatic groups we refer the reader to the multi-author book~\cite{epstein_chpt:word_processing_in_groups92}.

However, the class of automatic groups has its limitations. First of all, many of groups that play an important role in geometric group theory are not automatic. These include finitely generated nilpotent groups that are not virtually abelian, Baumslag-Solitar groups $BS(p,q)$ (unless $p=0$, $q=0$, or $p=\pm q$), non-finitely presented groups, infinite torsion groups, $SL_n(\Z)$. This would be desirable to extend the class of automatic groups to some wider class while preserving the computational routines of automatic groups. Further, some of the very basic questions about the class of automatic groups have still not been solved despite considerable efforts by the mathematical community. For example, it is not known whether each automatic group is bi-automatic.

In view of the above arguments it was quite natural to search for possible generalizations of the class of automatic groups. Several papers offered different approaches. Combable groups share with automatic groups the fellow traveler property, but have weaker constraints on the language used in the definition. Bridson in~\cite{bridson:combings03} discusses the relation between these two classes. The geometric generalization of the class of automatic groups, so-called, asynchronously indexed-combable groups, was defined and studied by  Gillman and Bridson in~\cite{bridson_g:formal_language_theory96}. It uses indexed languages and covers the fundamental groups of all compact 3-manifold  satisfying the geometrization conjecture. Unfortunately, this class looses certain important algorithmic features of automatic groups. Recently Brittenham and Hermiller~\cite{brittenham_h:stackable} defined another related class of stackable groups. They show, in particular, that every shortlex automatic group, including every word hyperbolic group, is regularly stackable, and that each stackable group is finitely presented. The exact relationship between these two classes is not yet fully understood.

The notion of a Cayley automatic group was introduced and studied  in~\cite{kharlampovich_km:automatic_structures14} as a natural generalization of the class of automatic groups. It has been observed that the Cayley graphs of automatic groups are automatic with respect to special encoding, in the sense of the theory of automatic structures developed, in particular, by Khoussainov and Nerode~\cite{khoussainov_n:automatic_presentations95}. This theory can be traced back to works of Hodgson in the end of 1970's -- beginning of 1980's~\cite{hodgson:decidabilite_par_automate_fini83}. For a survey on the results in this theory we refer the reader to a paper by Rubin~\cite{rubin:automata_presenting_structures_survey08}. A natural way to generalize the notion of automatic groups would be to remove the condition on the encoding on Cayley graphs. In other words, a group is called Cayley automatic, if its Cayley graph is automatic.

The class of Cayley automatic groups retains many algorithmic properties of the class of automatic groups, but is much wider. In particular, it includes many examples of nilpotent and solvable groups, which are not automatic in the standard sense. Some of Cayley automatic groups are not finitely presented. For example, the restricted wreath product of a nontrivial finite group $G$ by $\Z$ is Cayley automatic. Further, it was recently shown by Miasnikov and \v Suni\'c in~\cite{miasnikov_s:cayley_automatic11} that there exist Cayley automatic groups that are not Cayley biautomatic, thus resolving an analogue of a longstanding question of the theory of automatic groups. At the same time, main algorithmic tools of automatic groups still work. In particular, the word problem in each Cayley automatic group can be decided in a quadratic time.

Even further generalization of Cayley automatic groups, was recently introduced and studied by Elder and Taback in~\cite{elder_t:Cgraph_automatic_groups}. For each class of languages $\mathcal C$ they define $\mathcal C$-graph automatic groups in exactly the same way as Cayley automatic groups with the difference that the formal languages used in the definition must belong to class $\mathcal C$. In particular, if $\mathcal C$ is the class of regular languages, one simply obtains the class of Cayley automatic groups. One of the motivations to consider other classes of languages is the fact proved in~\cite{elder_t:Cgraph_automatic_groups} that polynomial time word problem algorithm is still preserved if one replaces the class of regular languages by the class of counter languages.

This paper was motivated by the following natural question regarding possible limitations of the class of Cayley automatic groups.

\begin{question}
Is there a Cayley Automatic group of intermediate growth?
\end{question}

Recall that the growth function of a finitely generated group $G$ with respect to a generating set $S$ is a function $\gamma_{G,S}\colon \mathbb N\to\mathbb N$ such that $\gamma_{G,S}(n)$ is equal to the number of elements of $G$ that can be expressed as a product of at most $n$ elements of $S\cup S^{-1}$. More generally, the growth function $\gamma_{\Gamma,x}(n)$ of a locally finite graph $\Gamma$ with respect to the selected base point $x$ is a function such that $\gamma_{\Gamma,x}(n)$ is the number of elements in the ball of radius $n$ in $\Gamma$ centered at the base point. The growth function $\gamma_{G,S}$ can then be defined as $\gamma_{\Cay(G,S),e}$, where $\Cay(G,S)$ is the Cayley graph of $G$ with respect to a generating set $S$ and $e$ is the identity element in $G$. The growth function of any finitely generated group cannot grow faster than the exponential function and, according to Gromov's celebrated theorem, $\gamma_{G,S}(n)$ grows as a polynomial function if and only if $G$ is virtually nilpotent. It was a longstanding question posed by Milnor if there is a group whose growth function is intermediate, i.e. it grows faster than any polynomial function, but slower than the exponential function~\cite{milnor:problem}. The first example of such group was constructed by Grigorchuk in~\cite{grigorch:degrees}, but even up to now all known constructions of such groups are based to certain extent on ideas from the original construction in~\cite{grigorch:degrees}. It is known that automatic groups cannot have intermediate growth, which is a limitation that might be overcame by passing to a bigger class. Also, there are no known examples of finitely presented groups of intermediate growth, which makes the class of Cayley automatic groups particularly appealing. Unlike the class of automatic groups it contains many groups that are not finitely presented.

By the definition, a group is Cayley automatic if and only if its Cayley graph is automatic (i.e. admits automatic structure). The main purpose of this paper is the following theorem.

\begin{theorem}
There is an automatic graph of intermediate growth.
\end{theorem}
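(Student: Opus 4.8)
The plan is to realise $\Gamma$ as a Schreier graph of a self-similar action of a $3$-state automaton group, so that its automaticity reduces to a routine verification built on the fact that automaton groups act by finite-state (transducer) maps, and to concentrate the genuine work on showing that the growth of this particular Schreier graph is intermediate. Concretely, I would fix a faithful self-similar action of a finitely generated group $\G$ on the rooted tree $X^{\ast}$, presented by a $3$-state Mealy automaton, choose an ultimately periodic point $\xi\in X^{\omega}$, and let $\Gamma$ be the Schreier graph of $\G$ on the orbit of $\xi$ with edges labelled by a fixed symmetric generating set chosen so that $\Gamma$ is $4$-regular. A vertex $uw^{\omega}$ of $\Gamma$ is to be coded by the finite word $u\diamond w$ over $\Xd=X\cup\{\diamond\}$, the symbol $\diamond$ separating the preperiod from the period; normalising the preperiod to minimal length makes the coding injective, and the set of codes that actually occur is a regular language $L\subseteq\Xd^{\ast}$. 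The automaton and the point $\xi$ are to be picked precisely so that $\Gamma$ has intermediate growth --- the essential point being that, in sharp contrast with Cayley graphs, the growth of these Schreier graphs is a genuinely tunable quantity, ranging over linear, polynomial of various degrees, intermediate, and exponential as $\G$ and $\xi$ vary.

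\smallskip
The automaticity of $\Gamma$ is then straightforward. The domain $L$ is regular, as noted. For each generator $s$ of $\G$, the induced transformation of codes $u\diamond w\mapsto u'\diamond w'$, where $s(uw^{\omega})=u'(w')^{\omega}$, is obtained by running the finite transducer that defines $s$ on the ultimately periodic input $uw^{\omega}$: the transducer enters a cycle on the periodic tail, so the new preperiod $u'$ and period $w'$ are produced from $(u,w)$ by a bounded-memory computation, and hence the relation $\{(u\diamond w,\ u'\diamond w')\}$ on $L$ is recognised by a synchronous two-tape automaton over the padded product alphabet. The finite union of these relations over the generating set is an automaton for the edge relation of $\Gamma$, so $\Gamma$ is automatic in the sense of Khoussainov--Nerode.

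\smallskip
The heart of the proof is the growth estimate for $\gamma_{\Gamma,x}$ at the base point $x$. The self-similarity of the action forces a substitution-type recursion on the ball sizes in $\Gamma$ (equivalently, on those in the finite Schreier graphs $\Gamma_{n}$ that exhaust $\Gamma$), in the spirit of the recursive descriptions of Schreier graphs of self-similar and branch groups, and I would extract both bounds from this recursion. For the upper bound one wants $\gamma_{\Gamma,x}$ subexponential: a vertex within distance $r$ of $x$ is the image of $x$ under a word of length $\le r$, so it suffices to bound the number of such images, which the recursion controls (and which, were $\G$ itself of subexponential growth, would be immediate from $\gamma_{\Gamma,x}\le\gamma_{\G,S}$). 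For the lower bound one wants $\gamma_{\Gamma,x}$ superpolynomial: the recursion must be shown to force the balls of $\Gamma$ to contain, at radius $r$, more than any fixed power of $r$ vertices --- equivalently, the finite graphs $\Gamma_{n}$ must grow faster than polynomially out to their diameters. Solving the recursion and verifying that its parameters place the growth strictly between the polynomial and exponential regimes is the crux. Indeed the real difficulty is not automaticity --- forced as soon as $\G$ is a finite-automaton group and the vertices are coded by ultimately periodic words --- but the design and analysis of a concrete $3$-state automaton whose Schreier graph expands fast enough to outrun every polynomial yet slowly enough to stay subexponential, and it is there that essentially all the work of the construction lies.
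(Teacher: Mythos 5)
There is a genuine gap: your proposal is a strategy outline rather than a proof, and the part you defer is exactly the part a proof must supply. You never exhibit a concrete $3$-state automaton, a concrete point $\xi$, or a concrete growth recursion; you explicitly concede that ``the design and analysis of a concrete $3$-state automaton whose Schreier graph expands fast enough \dots is where essentially all the work of the construction lies,'' and then you do not do that work. The paper avoids this entirely by taking an \emph{already known} family of Schreier graphs of intermediate growth --- those of the group $\G$ generated by $a=(e,a)\sigma$, $b=(b,a)$ acting on the orbits in $\{0,1\}^\infty$, whose growth bounds $n^{\frac12\log_2 n}\preceq |B(\omega,n)|\preceq n^{\log_2 n}$ are quoted from Bondarenko--Ceccherini-Silberstein--Donno--Nekrashevych --- and spends all of its effort on the automaticity verification, which is the opposite of your allocation of difficulty. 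As written, neither half of your argument (superpolynomial lower bound, subexponential upper bound) is established for any specific graph.

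The second problem is your claim that automaticity is ``forced as soon as $\G$ is a finite-automaton group and the vertices are coded by ultimately periodic words.'' This is overstated. For a general automaton group the orbit of an ultimately periodic point need not consist of words with bounded minimal period (the sections of long products of generators form larger and larger transducers), so the domain language of normalised codes $u\diamond w$ need not be regular, and the normalisation to minimal preperiod and primitive period after running the transducer is precisely the step a synchronous two-tape acceptor cannot perform in general. The paper's argument works because the orbit of $(01)^\infty$ coincides with its cofinality class, so every vertex is coded by a finite prefix alone (no period part is needed), and it explicitly flags the one place where it uses that each letter of $(01)^\infty$ determines the next --- the construction as given only works for $\omega\in\{0^\infty,1^\infty,(01)^\infty,(10)^\infty\}$. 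Even granting your framework, the edge relations still require the explicit acceptors $\A_a$, $\A_b$ (or an equivalent case analysis of how the transducer interacts with the padding symbols), which your appeal to ``bounded-memory computation'' does not substitute for. To repair the proposal you would either have to prove the growth estimates for a specific automaton from scratch, or, as the paper does, cite them and then carry out the regularity verification in detail for that specific graph.
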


The graph that we use to prove the main theorem belongs to the family of graphs of intermediate growth constructed in~\cite{bond_cdn:amenable} as the family of Schreier graphs of the action of a group generated by the two nontrivial states of the 3-state automaton depicted in Figure~\ref{fig:929} on the boundary of a binary rooted tree. One of the graphs in this family was constructed earlier by Benjamini and Hoffman in~\cite{benjamini_h:omega_per_graphs}. The last paper also gives credit to Bartholdi who pointed out that the graph under consideration was, in fact, a Schreier graph of a group generated by automaton.

The paper is organized as follows. In Section~\ref{sec:prelim} we recall the main definitions related to groups generated by automata. The main example of a graph of intermediate growth $\Gamma_{(01)^\infty}$ and the group $G$ acting on this graph is given in Section~\ref{sec:schreier}. Section~\ref{sec:CayleyAutom} introduces the notions of an automatic graph and of a Cayley automatic group. Finally, Section~\ref{sec:main} contains the proof that the graph $\Gamma_{(01)^\infty}$ is automatic.

\noindent{\bf Acknowledgement.} The authors are grateful to Thomas Colcombet for useful discussions and to the anonymous referee whose valuable suggestions have enhanced the exposition of the paper and optimized some proofs.

\section{Groups generated by automata}
\label{sec:prelim}

Let $X$ be a finite set of cardinality $d$ and let $X^*$ denote the set of all finite words over $X$ (that can be though as the free monoid generated by $X$). This set can be naturally endowed with a structure of a
rooted $d$-ary tree by declaring that $v$ is adjacent to $vx$ for
any $v\in X^*$ and $x\in X$. The empty word corresponds to the root
of the tree and $X^n$ corresponds to the $n$-th level of the tree.
We will be interested in the groups of graph automorphisms and semigroups
of graph homomorphisms of $X^*$. Any such homomorphism can be defined via
the notion of initial automaton.

\begin{definition}
A \emph{Mealy automaton} (or simply \emph{automaton}) is a tuple
$(Q,X,\pi,\lambda)$, where $Q$ is a set (a set of states), $X$ is a
finite alphabet, $\pi\colon Q\times X\to Q$ is a transition function
and $\lambda\colon Q\times X\to X$ is an output function. If the set
of states $Q$ is finite the automaton is called \emph{finite}. If
for every state $q\in Q$ the output function $\lambda(q,x)$ induces
a permutation of $X$, the automaton $\A$ is called invertible.
Selecting a state $q\in Q$ produces an \emph{initial automaton}
$\A_q$.
\end{definition}

Automata are often represented by the \emph{Moore diagrams}. The
Moore diagram of an automaton $\A=(Q,X,\pi,\lambda)$ is a directed
graph in which the vertices are the states from $Q$ and the edges
have form $q\stackrel{x|\lambda(q,x)}{\longrightarrow}\pi(q,x)$ for
$q\in Q$ and $x\in X$. If the automaton is invertible, then it is
common to label vertices of the Moore diagram by the permutation
$\lambda(q,\cdot)$ and leave just first components from the labels
of the edges. An example of Moore diagram is shown in
Figure~\ref{fig:929}.

Any initial automaton induces a homomorphism of $X^*$. Given a word
$v=x_1x_2x_3\ldots x_n\in X^*$ it scans its first letter $x_1$ and
outputs $\lambda(x_1)$. The rest of the word is handled in a similar
fashion by the initial automaton $\A_{\pi(x_1)}$. Formally speaking,
the functions $\pi$ and $\lambda$ can be extended to $\pi\colon
Q\times X^*\to Q$ and $\lambda\colon  Q\times X^*\to X^*$ via
\[\begin{array}{l}
\pi(q,x_1x_2\ldots x_n)=\pi(\pi(q,x_1),x_2x_3\ldots x_n),\\
\lambda(q,x_1x_2\ldots x_n)=\lambda(q,x_1)\lambda(\pi(q,x_1),x_2x_3\ldots x_n).\\
\end{array}
\]

By construction any initial automaton acts on $X^*$ as a
homomorphism and every invertible initial automaton acts on $X^*$ as an
automorphism.

\begin{definition}
The semigroup (group) generated by all states of an automaton $\A$ is
called an \emph{automaton semigroup} (\emph{automaton group}) and
denoted by $\mathds S(\A)$ (respectively $\mathds G(\A)$).
\end{definition}

Another popular name for automaton groups and semigroups is
self-similar groups and semigroups
(see~\cite{nekrash:self-similar}).

Conversely, any homomorphism of $X^*$ can be encoded by the action
of an initial automaton. In order to show this we need a notion of a
\emph{section} of a homomorphism at a vertex of the tree. Let $g$ be
a homomorphism of the tree $X^*$ and $x\in X$. Then for any $v\in
X^*$ we have
\[g(xv)=g(x)v'\]
for some $v'\in X^*$. Then the map $g|_x\colon X^*\to X^*$ given by
\[g|_x(v)=v'\]
defines a homomorphism of $X^*$ and is called the \emph{section} of
$g$ at vertex $x$. Furthermore,  for any $x_1x_2\ldots x_n\in X^*$
we define \[g|_{x_1x_2\ldots x_n}=g|_{x_1}|_{x_2}\ldots|_{x_n}.\]

Given a homomorphism $g$ of $X^*$ we construct an initial automaton
$\A(g)$ whose action on $X^*$ coincides with that of $g$ as follows.
The set of states of $\A(g)$ is the set $\{g|_v\colon  v\in X^*\}$
of different sections of $g$ at the vertices of the tree. The
transition and output functions are defined by
\[\begin{array}{l}
\pi(g|_v,x)=g|_{vx},\\
\lambda(g|_v,x)=g|_v(x).
\end{array}\]

Throughout the paper we will use the following convention. If $g$
and $h$ are the elements of some (semi)group acting on set $A$ and
$a\in A$, then
\begin{equation}
\label{eqn_conv}
gh(a)=h(g(a)).
\end{equation}

Taking into account convention~\eqref{eqn_conv} one can compute
sections of any element of an automaton semigroup as follows. If
$g=g_1g_2\cdots g_n$ and $v\in X^*$, then

\begin{equation}
\label{eqn_sections} g|_v=g_1|_v\cdot g_2|_{g_1(v)}\cdots
g_n|_{g_1g_2\cdots g_{n-1}(v)}.
\end{equation}

For any automaton group $G$ there is a natural embedding
\[G\hookrightarrow G \wr \Sym(X)\]
defined by
\[G\ni g\mapsto (g_1,g_2,\ldots,g_d)\lambda(g)\in G\wr \Sym(X),\]
where $g_1,g_2,\ldots,g_d$ are the sections of $g$ at the vertices
of the first level, and $\lambda(g)$ is a permutation of $X$ induced by the action of $g$ on the first level of the tree.

The above embedding is convenient in computations involving the
sections of automorphisms, as well as for defining automaton groups. Sometimes it is called the \emph{wreath recursion} defining the group.

Finally, we note that any homomorphism of $X^*$ induces an action on the set $X^\infty$ of all infinite words over $X$ that can be viewed as a boundary of the tree $X^*$.

\section{Definition of the group and structure of the graph}
\label{sec:schreier}
The main graph studied in this paper is a Schreier graph of a certain group $G$ defined below. We start this section from recalling the definition of a Schreier graph.

\begin{definition}
Let $G$ be a group generated by a finite generating set $S$ acting on a set $M$. The \emph{(orbital) Schreier graph} $\Gamma(G,S,M)$ of the action of $G$ on $M$ with respect to the generating set $S$ is an oriented labeled graph defined as follows. The set of vertices of $\Gamma(G,S,M)$ is $M$ and there is an arrow from $x\in M$ to $y\in M$ labeled by $s\in S$ if and only if $x^s=y$, where $x^s$ denotes the image of $x$ under the action of $s$. We will call a Schreier graph with a selected basepoint a \emph{pointed} Schreier graph.
\end{definition}

An equivalent view on Schreier graphs goes back to Schreier, who called these graphs \emph{coset graphs}. For any subgroup $H$ of $G$, the group $G$ acts on the right $H$-cosets $G/H$ by right multiplication. This action gives rise to the Schreier graph $\Gamma(G,S,G/H)$. Conversely, if $G$ acts on $M$ transitively, then $\Gamma(G,S,M)$ is canonically isomorphic to $\Gamma(G,S,G/\mathop{\rm Stab}\nolimits_G(x))$ for any $x\in M$, where the vertex $y\in M$ in $\Gamma(G,S,M)$ corresponds to the coset from $G/\mathop{\rm Stab}\nolimits_G(x)$ consisting of all elements of  $G$ that move $x$ to $y$.  Also, to simplify notation, we will refer to $\Gamma(G,S,G/\mathop{\rm Stab}\nolimits_G(x))$ as the Schreier graph of $x$ and denote it by $\Gamma_x$ when the group, the set, and the action are clear from the context.

Consider a group $\G$ generated by two nontrivial states of a 3-state automaton $\mathcal A$ over 2-letter alphabet $X=\{0,1\}$ defined by the following wreath recursion
\[\begin{array}{l}
a=(e,a)\sigma,\\
b=(b,a),
\end{array}\]
where $e$ denotes the identity of $\G$ and $\sigma$ is a nontrivial permutation of $\{0,1\}$. The Moore diagram of this automaton is shown in Figure~\ref{fig:929}.

\begin{figure}
\begin{center}
\epsfig{file=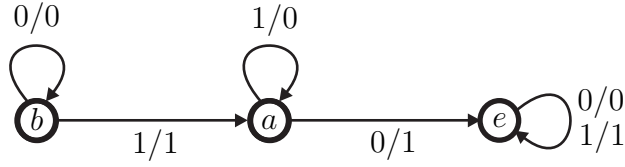}
\end{center}
\caption{Automaton generating group $\G$\label{fig:929}}
\end{figure}

This group acts on the boundary $\{0,1\}^\infty$ of a tree $\{0,1\}^*$ and this action induces an uncountable family of pointed orbital Schreier graphs $\Gamma_\omega$ for each $\omega\in \{0,1\}^\infty$. Namely, $\Gamma_\omega$ is an orbital Schreier graph of the action of $\G$ on the orbit of $\omega$ with respect to the generating set $S=\{a,b\}$ with the basepoint $\omega$. This family of graphs was completely described in~\cite{bond_cdn:amenable} and we borrow our notation from this paper.

The structure of $\Gamma_\omega$ is as follows. The vertices of $\Gamma_\omega$ are identified with integers and the set of edges $E_\omega$ consists of countably many families $E_\omega^n$, $n\geq0$ and, possibly, one loop based at one of the vertices of $\Gamma_\omega$. The family $E_\omega^0$ is defined as
\[E_\omega^0=\{(n,n+1)\colon n\in\Z\}.\]
Each successive $E_\omega^n$, $n>0$ is defined recursively. If $\omega=x_1x_2x_3\ldots$ and $x_n=0$, then let $c_n^\omega$ be the largest nonpositive integer that is not the endpoint of any of the edges in $E_\omega^1,\ldots,E_\omega^{n-1}$. If $x_n=1$, then let $c_n^\omega$ be the smallest positive integer that is not the endpoint of any of the edges in $E_\omega^1,\ldots,E_\omega^{n-1}$.
The family $E_\omega^n$ is now defined as
\[E_\omega^n=\{(2^nz+c_n^\omega, 2^n(z+1)+c_n^\omega)\colon z\in\Z\}.\]
By construction, if there are both infinitely many 0's and infinitely many 1's in $\omega$, then each vertex in $\Gamma_\omega$ will be adjacent to exactly $4$ edges in $\cup_{n\geq0}E_\omega^n$. In this case we simply have
\[E_\omega=\cup_{n\geq0}E_\omega^n.\]
If there is only a finite number of $0$'s or $1$'s in $\omega$, then all vertices in $\Gamma_\omega$ except exactly one vertex $t$ will have four adjacent edges in $\cup_{n\geq0}E_\omega^n$, while $t$ will be an endpoint of only two edges from $E_\omega^0$. In this case
\[E_\omega=\{\text{loop at $t$}\}\cup(\cup_{n\geq0}E_\omega^n).\]

In particular, graph $\Gamma_{(01)^\infty}$ is shown in Figure~\ref{fig:gamma0101}.

\begin{figure}
\begin{center}
\epsfig{file=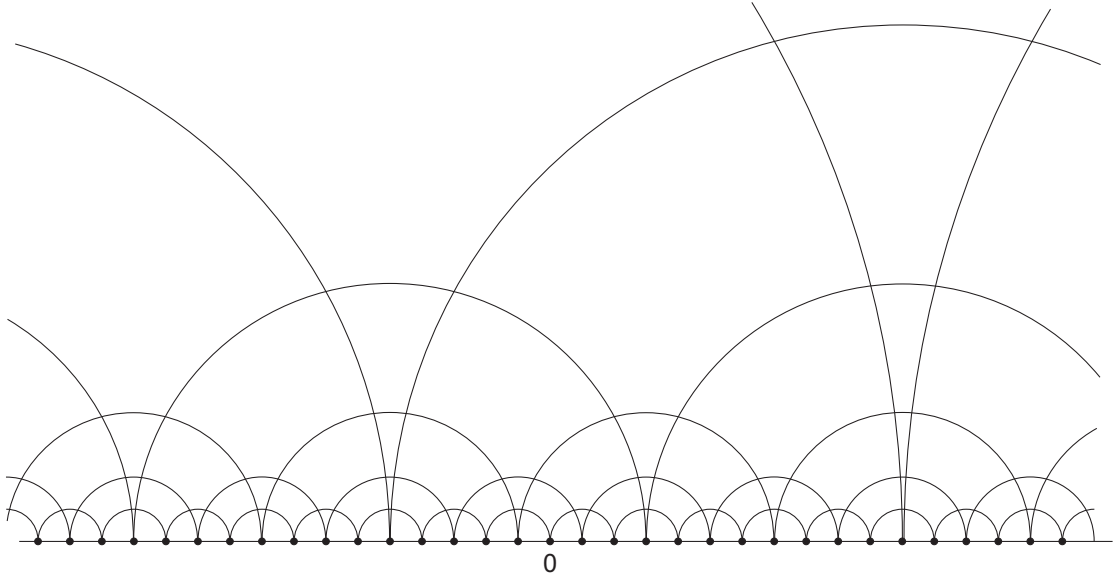,width=420pt}
\end{center}
\caption{Graph $\Gamma_{(01)^\infty}$\label{fig:gamma0101}}
\end{figure}

The following theorem has been proved in~\cite{bond_cdn:amenable}:

\begin{theorem}[\cite{bond_cdn:amenable}]
All orbital Schreier graphs $\Gamma_\omega$ for $\omega\in\{0,1\}^\infty$ of the group $\G$ have intermediate growth. More specifically, the growth function satisfies
\[n^{\frac12log_2n}\preceq |B(\omega,n)|\preceq n^{log_2n}\]
\end{theorem}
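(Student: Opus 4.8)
The plan is to exploit the self-similarity of the family $\{\Gamma_\omega\}$ under the shift $\sigma\colon x_1x_2x_3\cdots\mapsto x_2x_3\cdots$, which is a shadow of the relations $a=(e,a)\sigma$, $b=(b,a)$ defining $\G$. Reading off the definition of the edge families, one should first prove the following recursive description of $\Gamma_\omega$: it is obtained from $\Gamma_{\sigma\omega}$ by subdividing each of its unit ($E^0$-) edges with one new vertex, joining the new midpoints into a parallel bi-infinite path, and carrying every remaining (long) edge over unchanged with its scale raised by one; equivalently, the residue class $R_1^\omega\subset\Z$ (the even, resp.\ odd, integers according to $x_1$) together with all edges of $\Gamma_\omega$ of scale $\ge 2$ is a copy of $\Gamma_{\sigma\omega}$ scaled by $2$, because $R_j^\omega$ rescales to $R_{j-1}^{\sigma\omega}$ and $E_\omega^j$ to $E_{\sigma\omega}^{j-1}$. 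Carrying this out amounts to verifying the transformation law $c_j^\omega\leftrightarrow c_{j-1}^{\sigma\omega}$ for the shifts appearing in the construction. (Alternatively one may phrase everything through the finite level-$n$ Schreier graphs $\Gamma(\G,S,X^n)$, for which the wreath recursion gives an analogous substitution rule, and pass to $\Gamma_\omega$ at the end, using that $B(\omega,n)$ depends only on finitely many letters of $\omega$.)

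This recursion yields two-sided control of the metric. The projection $\Pi\colon V(\Gamma_\omega)\to V(\Gamma_{\sigma\omega})=\Z$ collapsing each subdivided vertex $2u+\epsilon$ to $u$ and each midpoint to an adjacent integer sends every edge of $\Gamma_\omega$ to an edge or loop of $\Gamma_{\sigma\omega}$, whence $d_{\sigma\omega}(\Pi x,\Pi y)\le d_\omega(x,y)$; conversely, lifting geodesics and doubling unit steps gives $d_\omega(2u+\epsilon,2v+\epsilon)\le 2\,d_{\sigma\omega}(u,v)$, and riding the midpoint-path gives $d_\omega(x,y)\le|\Pi x-\Pi y|+O(1)$. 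Iterating $m$ times and tracking the quantity $r_\omega(j):=d_{\Gamma_\omega}(\text{basepoint},R_j^\omega)$ — the distance to the nearest endpoint of a scale-$j$ edge — one should obtain $r_\omega(j)=2^{\Theta(\sqrt j)}$. The lower estimate $r_\omega(j)\ge 2^{\Omega(\sqrt j)}$ gives the upper bound on growth at once: a geodesic of length $\le n$ can traverse a scale-$j$ edge only when $r_\omega(j)\le n$, i.e.\ only for $j\le O((\log_2 n)^2)$, so every vertex of $B(\omega,n)$ lies within $\Z$-distance $n\cdot 2^{O((\log_2 n)^2)}=n^{O(\log_2 n)}$ of the basepoint, and a length-$L$ interval of $\Z$ contains only $L+1$ vertices; hence $|B(\omega,n)|\preceq n^{\log_2 n}$. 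For the lower bound $|B(\omega,n)|\succeq n^{\frac12\log_2 n}$ one uses the matching $r_\omega(j)\le 2^{O(\sqrt j)}$ together with an explicit construction: choosing $k\asymp\log_2 n$ well-separated scales $j_1<\cdots<j_k$, spending about $r_\omega(j_i)$ steps to reach a scale-$j_i$ vertex near the current position and then $t_i$ further steps hopping along $E_\omega^{j_i}$, one produces $\prod_i t_i$ distinct reachable vertices under the budget $\sum_i\big(r_\omega(j_i)+t_i\big)\le n$; optimizing over $k$ and the $t_i$, the overhead $\sum_i r_\omega(j_i)$ is exactly what costs the factor $\tfrac12$ in the exponent.

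The crux — where essentially all the work lies — is the estimate $r_\omega(j)=2^{\Theta(\sqrt j)}$, i.e.\ the statement that climbing from scale-$j$ edges to scale-$(j{+}1)$ edges costs, on average over the levels, a multiplicative factor $2^{\Theta(1/\sqrt j)}$; the lower half $r_\omega(j)\ge 2^{\Omega(\sqrt j)}$ powers the upper growth estimate and the upper half $r_\omega(j)\le 2^{O(\sqrt j)}$ powers the lower one. This is the point at which the $\omega$-dependence of the $c_j^\omega$ becomes a genuine nuisance — for some $\omega$ the nearest scale-$j$ endpoint to the basepoint sits at $\Z$-distance close to $2^{j-1}$, for others much nearer — so the recursion must be set up uniformly in $\omega$; I expect this to be cleanest in the finite-graph formulation, where the relevant quantity is the diameter of a level-$m$ tile and one can induct directly along the substitution rule. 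Everything else (the recursive description of $\Gamma_\omega$, the projection/lifting distance bounds, the final counting) is routine bookkeeping once this estimate is in hand.
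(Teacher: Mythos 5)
First, note that the paper under review does not prove this theorem: it is quoted verbatim from \cite{bond_cdn:amenable}, so the only meaningful comparison is with the proof given there. Your outline does track the shape of that argument --- exploiting the self-similarity of the family $\{\Gamma_\omega\}$ under the shift (equivalently, working with the finite Schreier graphs $\Gamma(\G,S,X^n)$ and the substitution rule coming from the wreath recursion), reducing everything to a two-sided estimate on the distance $r_\omega(j)$ from the basepoint to the nearest endpoint of a scale-$j$ edge, and then counting. The two counting steps (the upper bound on growth from $r_\omega(j)\ge 2^{\Omega(\sqrt j)}$ plus the fact that a ball lies in an interval of $\Z$, and the lower bound from the multi-scale itinerary) are set up correctly, at least up to constants in the exponents.

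The genuine gap is exactly where you write ``one should obtain $r_\omega(j)=2^{\Theta(\sqrt j)}$'': that estimate \emph{is} the theorem, and nothing you put on the table proves it. The tools you actually develop --- the projection $\Pi$ and the lifting inequalities --- show only that passing from $\Gamma_{\sigma\omega}$ to $\Gamma_\omega$ multiplies distances by a factor between $1$ and $2$; iterated, this brackets $r_\omega(j)$ between $O(1)$ and $2^j$, a window compatible with anything from exponential to polynomial growth. The reason the truth is $2^{\Theta(\sqrt j)}$ --- that the doubling ``bites'' only about once per $\sqrt j$ consecutive levels --- lives entirely in the arithmetic of the shifts $c_j^\omega$: one must control, uniformly in $\omega$, how the residue classes $c_j^\omega+2^j\Z$ of successive levels interleave near the basepoint, so that on most levels a scale-$(j{+}1)$ endpoint is reachable from a scale-$j$ endpoint at cost $o(r_\omega(j))$, while the costs still accumulate to a factor of $2$ over each block of roughly $\sqrt j$ levels. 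You correctly identify this as the crux and even flag the $\omega$-dependence as the danger, but identifying the crux is not resolving it; as written, the argument establishes intermediate growth only conditionally on an unproved estimate that is essentially a restatement of the conclusion. A secondary, fixable point: the specific exponents $\tfrac12\log_2 n$ and $\log_2 n$ in the statement require sharp constants in the $r_\omega(j)$ estimate, which $O/\Omega$ bookkeeping cannot deliver.
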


The above theorem is a generalization of an earlier result of Benjamini and Hoffman~\cite{benjamini_h:omega_per_graphs} who, in particular, proved that $\Gamma_{0^\infty}$ has intermediate growth.

\section{Automatic Graphs and Cayley Automatic Groups}
\label{sec:CayleyAutom}
Let $X$ by a finite alphabet. For a special symbol $\diamond\notin X$ we define an extended alphabet $X_\diamond=X\cup\{\diamond\}$. For a pair $(w_1,w_2)$ of finite words over $X$ we define a \emph{convolution} or a \emph{padded pair} (see, for example,~\cite{holt:automatic_groups98}) $\otimes(w_1,w_2)$ to be the word over $(\Xd)^2$ of length $\max\{|w_1|,|w_2|\}$, whose $j$-th symbol is $(\sigma_1,\sigma_2)$, where
\[\sigma_i=\left\{\begin{array}{ll}
\text{the $j$-th symbol of $w_i$}, &\text{if\ } j\leq |w_i|\\
\diamond, & \text{otherwise}
\end{array}\right.\]
For example, if $X=\{0,1\}$, then
\[\otimes(011,00110)=\left(\!\!\!\begin{array}{c}0\\0\end{array}\!\!\!\right)\left(\!\!\!\begin{array}{c}1\\0\end{array}\!\!\!\right)\left(\!\!\!\begin{array}{c}1\\1\end{array}\!\!\!\right)\left(\!\!\!\begin{array}{c}\diamond\\1\end{array}\!\!\!\right)\left(\!\!\!\begin{array}{c}\diamond\\0\end{array}\!\!\!\right),\]
where letters of $(\Xd)^2$ are written for convenience as columns. We note that $w_1$ and $w_2$ can be empty.

Let $R$ be a binary relation on $X^*$. The \emph{convolution} of $R$ is the language over $(\Xd)^2$ defined by
\[\otimes R=\{\otimes(w_1,w_2)\ |\ (w_1,w_2)\in R\}.\]

A binary relation $R$ on $X^*$ is called \emph{regular} if its convolution $\otimes R$ is a regular language over $(\Xd)^2$, i.e. $\otimes R$ is recognizable by a finite automaton acceptor over $(\Xd)^2$. To avoid possible confusion we emphasize that the automata acceptors here are different from automata transducers defined in Section~\ref{sec:prelim}.

Now we proceed to the definition of automatic graphs and Cayley automatic groups.

Let $\Gamma=(V,E,\sigma\colon E\to S)$ be graph whose edges are labeled by elements of finite set $S=\{s_1,s_2,\ldots,s_n\}$ according to the map $\sigma$. This graph can be interpreted as a system of $|S|$ binary relations $E_s$ on $V$, for $s\in S$, where
\[E_s=\{(v,v')\ |\ (v,v')\in E\ \text{and the label of $(v,v')$ is $s$}\}.\]

Each map $\overline{\phantom{V}}\colon V\to X^*$ induces $|S|$ binary relations $\overline{E}_s$ on $X^*$ given by
\[\overline{E}_s=\{(\overline v,\overline{v'})\ |\ (v,v')\in E_s\}.\]

The definition of automatic graph below is a particular instance of an automatic structure~\cite{khoussainov_n:automatic_presentations95,kharlampovich_km:automatic_structures14}.

\begin{definition}
The labeled graph $\Gamma=(V,E,\sigma\colon E\to S)$ is called \emph{automatic}, if there is a finite alphabet $X$ and an injective map $\overline{\phantom{V}}\colon V\to X^*$ such that
\begin{itemize}
\item $\overline V$ is a regular language over $X$ and
\item $\overline{E}_s$ is a regular binary relation on $X^*$ for each $s\in S$.
\end{itemize}
In such a case, the tuple $(\overline V,\overline{E}_{s_1},\overline{E}_{s_2},\ldots,\overline{E}_{s_k})$ is called an \emph{automatic structure} on graph $\Gamma$ with respect to $S=\{s_1,s_2,\ldots,s_k\}$.
\end{definition}

A rich source of examples of labeled graphs comes from group actions. Let $G$ be a finitely generated group with a finite generating set $S$. The (right) Schreier graph $\Gamma(G,S,Y)$ of the action of $G$ on $Y$ is a graph, whose vertex set is $Y$, and for each $y\in Y$ and $s\in S$ there is  an edge labelled by $s$ from $y$ to $y^s$. The (right) Cayley graph of $G$ can be thought of as a Schreier graph of the regular action of $G$ on itself by multiplication on right.

The definition of Cayley automatic groups from~\cite{kharlampovich_km:automatic_structures14} is as follows:

\begin{definition}
A finitely generated group $G$ with finite generating set $S$ is \emph{Cayley automatic} if its Cayley graph $\Cay(G,S)$ with respect to $S$ is automatic.
\end{definition}

We note that even though the property of being Cayley automatic depends only on a group, and not on a finite generating set (see~\cite{kharlampovich_km:automatic_structures14}), the same group can have both automatic and non-automatic Schreier graphs. For example, a group which is not Cayley automatic certainly acts trivially on the one element set producing an automatic Schreier graph.

\section{Main Result}
\label{sec:main}

It is an open question whether there is a Cayley automatic group of intermediate growth. We do not answer this question here, however, we construct an automatic Schreier graph of intermediate growth. The main purpose of this note is the following theorem.

\begin{theorem}
\label{thm:main}
The Schreier graph $\Gamma_{(01)^\infty}$ of intermediate growth is automatic.
\end{theorem}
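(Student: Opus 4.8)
The plan is to construct an explicit injective encoding of the vertex set $\Z$ of $\Gamma_{(01)^\infty}$ into a regular language over a small alphabet, and then verify that each of the two edge-relations $\overline E_a$ and $\overline E_b$ is recognized by a finite automaton acceptor over the padded alphabet $(\Xd)^2$. Since $\Gamma_{(01)^\infty}$ has its vertices identified with $\Z$, the first order of business is to pick a convenient regular encoding of $\Z$; the natural choice is a signed binary (or signed ``base-2 with a sign letter'') representation, e.g.\ words of the form $\epsilon u$ where $\epsilon\in\{+,-\}$ records the sign and $u$ is the binary expansion of $|n|$ written least-significant-digit first (LSDF), which makes carry propagation left-to-right and hence finite-state. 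One checks immediately that the image of $\Z$ under this map is a regular language (a simple syntactic condition on leading/trailing digits and the sign), so the first bullet of the definition of an automatic graph is satisfied.

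The core of the argument is to describe the edges combinatorially and then recognize them. Recall from Section~\ref{sec:schreier} that for $\omega=(01)^\infty$ the edge families are $E^0=\{(n,n+1):n\in\Z\}$ together with $E^n=\{(2^nz+c_n,2^n(z+1)+c_n):z\in\Z\}$, where the offsets $c_n$ alternate between the largest nonpositive and the smallest positive integers avoiding previously used endpoints; for the periodic word $(01)^\infty$ these offsets $c_n$ stabilize to an explicit, easily computed pattern (this is exactly the picture in Figure~\ref{fig:gamma0101}). The upshot is that the generators act as follows: $a$ always swaps $n$ with its neighbour $n\pm1$ across an ``$E^0$'' edge in a way determined by the parity of $n$, while $b$ moves $n$ by $\pm 2^k$ where the scale $k$ and the sign are determined by the 2-adic valuation of $n-c$ for the relevant offset $c$ — that is, by a bounded amount of information read off the low-order binary digits of $n$ (in our LSDF encoding, the length of the initial run of a fixed digit). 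Thus $\overline E_a$ and $\overline E_b$ are each of the form ``$\otimes(\overline n,\overline m)$ where $m=n\pm 2^{k(n)}$ and $k(n)$, together with the sign, is a regular function of a bounded prefix of $\overline n$.'' Adding a power of two in LSDF binary is a finite-state operation (one carry bit), as is detecting the initial run length used to choose $k(n)$, so the relation is recognized by a finite automaton over $(\Xd)^2$; the $\diamond$-padding is handled in the standard way since $|\overline n|$ and $|\overline m|$ differ by at most one.

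I would carry this out in the order: (1) fix the signed LSDF encoding and prove $\overline V$ regular; (2) write down the stabilized offsets $c_n$ and the resulting closed-form description of the maps $n\mapsto n^a$ and $n\mapsto n^b$ in terms of low-order binary data; (3) build the acceptor for each $\overline E_s$, essentially by composing a ``parse the low digits / sign'' finite-state preprocessor with the standard adder-by-$2^k$ automaton, and argue $\diamond$-padding changes nothing; (4) conclude by the definition that $(\overline V,\overline E_a,\overline E_b)$ is an automatic structure, and invoke Theorem~\ref{thm:main}'s hypothesis (the result of~\cite{bond_cdn:amenable}) for the intermediate-growth claim. The main obstacle I anticipate is step~(2): getting a clean, uniform formula for how $a$ and $b$ act on $n\in\Z$ — in particular pinning down the stabilized values of the offsets $c_n$ for $(01)^\infty$ and verifying that the scale and sign of the $b$-move depend only on a bounded window of the encoding (equivalently, that the relevant 2-adic valuations are a regular function of the word). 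Once that bookkeeping is done correctly, the finite-automaton constructions in steps~(1) and~(3) are routine.
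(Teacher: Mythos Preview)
Your route is genuinely different from the paper's, and while it is plausibly completable, it is more roundabout and leaves a real gap.

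The paper never touches the $\Z$-identification or any arithmetic. Instead it encodes a vertex $\xi$ (an infinite word cofinal with $(01)^\infty$) by the prefix $\overline\xi$ up to the last position at which $\xi$ disagrees with $(01)^\infty$; a four-state automaton then recognizes $\overline{V(\Gamma)}$. The key observation is that the edge relation for a generator $s\in\{a,b\}$ is \emph{already} encoded by the Mealy automaton $\A$: reinterpret each transition $q\stackrel{x|y}{\longrightarrow}q'$ as an acceptor transition on the letter $(x,y)\in X^2$, take $s$ as the initial state, and one reads exactly the pairs $(\xi,\xi^s)$. All that remains is to graft on a handful of $\diamond$-transitions to handle the case $|\overline{\xi}|\neq|\overline{\xi^s}|$; the paper does this explicitly (automata $\A_a$, $\A_b$) and proves $\otimes\overline{E}_s = L_s\cap L_{pairs}$, where $L_{pairs}$ is the regular language of convolutions of pairs from $\overline{V(\Gamma)}$. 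No offsets $c_n$, no adder automata, no sign bookkeeping; the group-generating automaton does all the work.

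The concrete gap in your plan is step~(2). The description of $\Gamma_\omega$ via the families $E^n$ in Section~\ref{sec:schreier} is \emph{unlabeled}: nothing there asserts that $E^0$ carries the $a$-edges and $\bigcup_{n\ge1}E^n$ the $b$-edges, and establishing this requires making explicit the bijection between the orbit of $(01)^\infty$ and $\Z$, which you do not do. Without that, you cannot even state which function $n\mapsto n^b$ you are trying to recognize, let alone the sign of the jump. Your description of the $b$-move via ``the $2$-adic valuation of $n-c$ for the relevant offset $c$'' is also circular as written (the relevant $c=c_k$ already depends on the unknown $k$), and the claim that $|\overline n|$ and $|\overline{n^s}|$ differ by at most one is not correct for $s=b$ (take $n=c_k$; the difference is bounded, but not by~$1$). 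These issues are likely fixable, but they are exactly the ``bookkeeping'' you flag as the obstacle, and the paper's encoding sidesteps every one of them by staying on the tree side, where the action of $a$ and $b$ is literally given by $\A$.
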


The proof of this theorem will be elaborated through the lemmas below. Throughout the proof we will denote $\Gamma_{(01)^\infty}$ simply by $\Gamma$. First, we produce an injection $\overline{\phantom{G}}\colon V(\Gamma)\to X^*$. This amounts to labelling the vertices of $\Gamma$ by different words over $X$. Since $\Gamma$ is a Schreier graph of the action of $\G$ on the orbit of $(01)^\infty$, the vertices of $\Gamma$ are already labelled by infinite words over $X$.

Recall that two infinite words $\omega=x_1x_2x_3\ldots$ and $\omega'=y_1y_2y_3\ldots$ in $X^\infty$ are called \emph{cofinal} if there exist $N>0$ such that $x_n=y_n$ for all $n\geq N$. It is proved in~\cite{bond_cdn:amenable} that the orbit of $\omega\in X^\infty$ coincides with the cofinality class of $\omega$ except the case if $\omega$ is cofinal to $0^\infty$ or $1^\infty$, when the orbit coincides with the union of cofinality classes of $0^\infty$ and $1^\infty$. Therefore, in the case of $\omega=(01)^\infty$, each vertex of $\Gamma$ is initially labelled by an infinite word over $X$ that is cofinal with $(01)^\infty$. We define an injection $\overline{\phantom{G}}\colon V(\Gamma)\to X^*$ by sending each vertex $v$ to the prefix of its label of length $l$ with the property that $l$ is the largest nonnegative integer such that the $l$-th digit of the label of $v$ differs from the $l$-th digit in $(01)^\infty$. So, for example, we have
\[\begin{array}{l}
\overline{0110(01)^\infty}=0110,\\
\overline{(01)^\infty}=\emptyset,
\end{array}\]
where $\emptyset$ denotes the empty word over $X$.

\begin{lemma}
\label{lem:vertices}
The set $\overline{V(\Gamma)}\subset X^*$ is a regular language over $X$.
\end{lemma}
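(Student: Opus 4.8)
The plan is to describe $\overline{V(\Gamma)}$ explicitly as a subset of $X^* = \{0,1\}^*$ and then verify regularity by exhibiting a finite automaton (or a regular expression). The key observation is that a word $w \in \{0,1\}^*$ lies in $\overline{V(\Gamma)}$ if and only if $w$ arises as the truncation described in the definition, i.e. $w$ is a finite prefix of some infinite word cofinal with $(01)^\infty$, and $w$ is ``maximal'' in the sense that the last letter of $w$ disagrees with $(01)^\infty$ at that position while every later position (there are none, since $w$ is finite) would have to agree. Concretely: writing the target word as $p = p_1 p_2 p_3 \cdots = (01)^\infty$, a nonempty word $w = w_1 \cdots w_l$ is in $\overline{V(\Gamma)}$ precisely when $w_l \neq p_l$; the empty word is also in $\overline{V(\Gamma)}$ (it is $\overline{(01)^\infty}$). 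So the membership condition is the purely local condition ``$w$ is empty, or the last symbol of $w$ differs from the symbol of $(01)^\infty$ in the last position of $w$.''

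First I would make the position-parity bookkeeping precise: the $j$-th digit of $(01)^\infty$ is $0$ when $j$ is odd and $1$ when $j$ is even (indexing from $1$). Hence the last-position condition says: if $|w| = l$ is odd then $w_l = 1$, and if $l$ is even then $w_l = 0$. Then I would build a small deterministic finite automaton over $\{0,1\}$ that tracks the parity of the number of symbols read so far (two states, plus a dead state, plus possibly an extra bit to remember the last symbol). The accepting condition is: accept the empty word (the start state is accepting), and after reading $l \geq 1$ symbols accept iff the parity of $l$ and the last symbol read satisfy the relation above. This is clearly a finite-state condition — the automaton needs only to remember the parity of the length and the value of the most recent input symbol — so $\overline{V(\Gamma)}$ is regular. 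Equivalently, one can just write down a regular expression such as $\epsilon \mid (00 \mid 01 \mid 10 \mid 11)^* (00 \mid 10) \mid (00 \mid 01 \mid 10 \mid 11)^* 1$, adjusted so that the final block enforces the correct parity/last-letter pairing.

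I do not expect a serious obstacle here; the only thing to be careful about is the precise statement of the truncation map and the off-by-one issues in the parity indexing — in particular making sure the empty word is handled correctly and that the ``largest nonnegative integer'' in the definition really does reduce to the local last-letter condition (it does, because any $w$ of length $l$ is automatically a prefix of the infinite word $w \cdot p_{l+1} p_{l+2} \cdots$, which is cofinal with $(01)^\infty$, so the only real constraint is at position $l$ itself). Once that reduction is made explicit, regularity is immediate from the finite-automaton construction.
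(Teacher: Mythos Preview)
Your proposal is correct and follows essentially the same route as the paper: both reduce membership in $\overline{V(\Gamma)}$ to the condition ``$w=\emptyset$ or the last letter of $w$ differs from the $|w|$-th letter of $(01)^\infty$,'' and then observe that this is recognised by a small finite automaton tracking the parity of the position together with the last symbol read. The paper simply exhibits such an automaton in a figure rather than spelling out the parity bookkeeping or a regular expression, but the argument is the same.
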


\begin{proof}
First of all, we observe that $w=x_1x_2\ldots x_l\in\overline{V(\Gamma)}$ if and only if either $w=\emptyset$ or the last letter $x_l$ of $w$ is different from the $l$-th letter of $(01)^\infty$.

\begin{figure}
\begin{center}
\epsfig{file=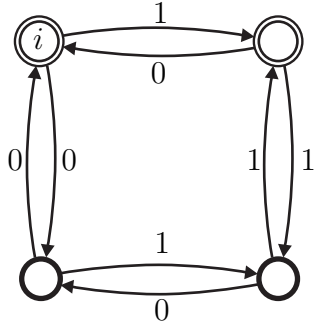}
\end{center}
\caption{Automaton $\A_V$ accepting $\overline{V(\Gamma_{(01)^\infty})}$\label{fig:aut_vertices}}
\end{figure}

It is straightforward to verify now that $\overline{V(\Gamma)}$ is accepted by the automaton $\A_V$ depicted in Figure~\ref{fig:aut_vertices}, where the initial state is labelled by $i$ and the terminal states are marked by double circles. Indeed, when automaton reads word $w=x_1x_2\ldots x_l$ over $X$ starting from the initial (top left) state, we can keep track of whether the last letter of $w$ is different from the $l$-th lettter of $(01)^\infty$ by looking at the state in which we end up after reading $w$. If we end up in one of the top two states, then $w$ ends with the letter opposite to the $l$-th letter in $(01)^\infty$ and is accepted by $\A_V$. On the contrary, if we end up in one of the bottom two states, then $w$ ends with the $l$-th letter in $(01)^\infty$ and is not accepted by $\A_V$.
\end{proof}

Note that more generally, we can similarly define $\overline{\phantom{G}}\colon V(\Gamma_{\omega})\to X^*$ for any $\omega\in X^\infty$. In the case of preperiodic $\omega$ the analog of Lemma~\ref{lem:vertices} can be proved by constructing a similar automaton. But we will not need this more general result here.

Before proving that $\overline{E}_a$ and $\overline{E}_b$ are regular relations on $X^*$ we prove the following auxiliary lemma.

\begin{lemma}
\label{lemma:pairs}
For every regular language $L$ over $X$ the language $L_{pairs}=\{\otimes(u,v)\ |\ u,v\in L\}$ is regular over $(\Xd)^2$.
\end{lemma}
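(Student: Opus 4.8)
The plan is to show that regular languages are closed under this ``padded pairing'' operation by constructing an explicit finite automaton for $L_{pairs}$ out of a finite automaton recognizing $L$. Fix a deterministic finite automaton $\A=(Q,X,\delta,q_0,F)$ accepting $L$. The intuition is that a word $\otimes(u,v)$ over $(\Xd)^2$ is read column by column, and each column feeds one letter to a ``first track'' and one letter to a ``second track''; as long as neither track has hit the padding symbol $\diamond$ we want to simulate $\A$ on both $u$ and $v$ in parallel, and once a track sees $\diamond$ it must see $\diamond$ forever after (this is what makes a string a legitimate convolution) and we simply stop advancing that track.

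Concretely, I would take the state set of the new automaton to be $(Q\cup\{\dagger\})\times(Q\cup\{\dagger\})$, where the symbol $\dagger$ is a fresh ``finished'' marker; the initial state is $(q_0,q_0)$. On reading a letter $(\sigma_1,\sigma_2)\in(\Xd)^2$ from a state $(p_1,p_2)$, for each coordinate $i\in\{1,2\}$ we update the $i$-th component according to the following rules: if $p_i\in Q$ and $\sigma_i\in X$, move to $\delta(p_i,\sigma_i)$; if $p_i\in Q$ and $\sigma_i=\diamond$, move to $\dagger$; if $p_i=\dagger$ and $\sigma_i=\diamond$, stay at $\dagger$; and if $p_i=\dagger$ and $\sigma_i\in X$, reject (go to a dead state). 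The accepting states are the pairs $(p_1,p_2)$ with $p_i\in F\cup\{\dagger\}$ for each $i$ \emph{and with the extra bookkeeping} that a coordinate equal to $\dagger$ is accepting only if $\A$ accepted the prefix read so far on that coordinate; to handle that cleanly I would instead not collapse a finished track to a single symbol $\dagger$ but keep the last $Q$-state reached and add one bit recording ``this track has terminated,'' so the state set is $(Q\times\{0,1\})^2$, the bit flips from $0$ to $1$ when the first $\diamond$ is seen, a track with bit $1$ reading a non-$\diamond$ letter is rejected, and a pair of states is accepting iff both $Q$-components lie in $F$. One then checks directly that this automaton accepts exactly $\{\otimes(u,v):u,v\in L\}$: a run corresponds to a valid convolution of two strings $u,v$, the $Q$-component of track $i$ ends in the state $\A$ reaches on $u$ (resp.\ $v$), and acceptance happens iff both are in $F$.

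There is essentially no obstacle here beyond being careful about two bookkeeping points: first, that the input is required to actually be a convolution, i.e.\ padding symbols in either coordinate occur only as a terminal block, which is exactly enforced by the ``bit $1$ reading a real letter $\Rightarrow$ reject'' transitions; and second, that after a track terminates the finite automaton must remember the state $\A$ was in at termination, which is why the $Q$-component is retained rather than discarded. Both are handled by the finite state set $(Q\times\{0,1\})^2$ described above, which is finite since $Q$ is finite, so $L_{pairs}$ is regular. (Alternatively, and perhaps more slickly, one can observe that $L_{pairs}=\otimes(L\times L)$ and that $\otimes R$ is regular whenever $R$ is a regular binary relation, together with the elementary fact that if $L$ is regular then $L\times L$ is a regular relation; but since the excerpt has not yet established closure of regular relations under such products, I would give the direct automaton construction above.)
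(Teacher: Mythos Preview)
Your construction is correct in spirit and is essentially the direct-product version of the paper's argument. The paper writes $L_{pairs}=L_1\cap L_2\cap L_3$, where $L_1$ (resp.\ $L_2$) checks that the first (resp.\ second) track lies in $L\diamond^*$, and $L_3$ checks that the word is a legitimate convolution; it then invokes closure of regular languages under intersection. Your $(Q\times\{0,1\})^2$-automaton is precisely the product of the automata for $L_1$ and $L_2$, hard-wired together, with the $L_3$-check folded into the transition rules. Each approach is equally elementary; yours gives a single explicit acceptor, the paper's makes the three separate constraints more visible.

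There is, however, one small gap. You characterise ``being a legitimate convolution'' as ``padding symbols in either coordinate occur only as a terminal block,'' and your transitions enforce exactly that. But this is only half the requirement: a convolution $\otimes(u,v)$ has length $\max\{|u|,|v|\}$, so the letter $(\diamond,\diamond)$ can never occur. As written, your automaton also accepts every word of the form $\otimes(u,v)\cdot(\diamond,\diamond)^k$ with $u,v\in L$ and $k\ge 1$, which strictly enlarges $L_{pairs}$ whenever $L\neq\emptyset$. The fix is trivial---send every state to a dead state on input $(\diamond,\diamond)$---but it should be stated; this is exactly the role of the clause ``has no letter $(\diamond,\diamond)$'' in the paper's language $L_3$.
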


\begin{proof}
There is a natural one-to-one correspondence between finite words over $(\Xd)^2$ and pairs of words over $\Xd$ of the same length. Thus, for words $u=x_1x_2\ldots x_n$ and $v=y_1y_2\ldots y_n$ over $\Xd$ of the same length we will sometimes denote by $(u,v)$ a corresponding word over $(\Xd)^2$ whose $j$-th letter $(x_j,y_j)$ for $1\leq j\leq n$.

By definition of the convolution we have $L_{pairs}=L_1\cap L_2\cap L_3$, where
\[\begin{array}{lll}
L_1&=&\{(u,v)\in ((\Xd)^2)^*\mid u\in L\diamond^*, v\in \Xd^*\},\\
L_2&=&\{(u,v)\in ((\Xd)^2)^*\mid u\in\Xd^*, v\in L\diamond^*\},\\
L_3&=&\{(u,v)\in ((\Xd)^2)^*\mid (u,v)\ \text{has no letter}\ (\diamond,\diamond)\\
&&\text{and has no subwords}\ (x,\diamond)(y,z)\ \text{and}\ (\diamond,x)(z,y), z\in X, x,y\in\Xd\}.
\end{array}\]

The languages $L_1$ and $L_2$ are regular. We can build an automaton $\A_{L_1}$ over $\Xd^2$ recognizing $L_1$ from the automaton $\A_L$ over $X$ with the state set $Q$ recognizing $L$ as follows. The set of states of $\A_{L_1}$ is $Q\cup\{t\}$, where $t$ is a terminal state of $\A_{L_1}$ not in $Q$. For each transition $q_1\stackrel{x}{\longrightarrow}q_2$ in $\A_L$ for $q_1,q_2\in Q$ and $x\in X$, we introduce $|X|+1$ transitions of the form $q_1\stackrel{(x,y)}{\longrightarrow}q_2$, $y\in\Xd$. Additionally, for each terminal state $t'\in Q$ we introduce $|X|+1$ transitions of the form $t'\stackrel{(\diamond,y)}{\longrightarrow}t$, $y\in\Xd$. The automaton recognizing $L_2$ is constructed similarly.

Since the language $L_3$ is clearly regular over $(\Xd)^2$, we get that $L_{pairs}$ is regular as the intersection of three regular languages.
\end{proof}

\begin{lemma}
\label{lem:edges}
For each $s\in\{a,b\}$ the binary relation $\overline{E}_s$ is regular over $X$.
\end{lemma}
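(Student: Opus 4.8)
The plan is to describe, for each generator $s \in \{a,b\}$, exactly how the label $\overline{v^s}$ of the target vertex depends on the label $\overline{v}$, and then exhibit a finite automaton over $(\Xd)^2$ recognizing $\otimes \overline{E}_s$. The key observation is that the rewriting $\overline{v} \mapsto \overline{v^s}$ is \emph{local}: since both $a$ and $b$ act on infinite words cofinal with $(01)^\infty$, and our encoding truncates each infinite word at the last position where it disagrees with $(01)^\infty$, the action of a single generator can only change finitely many letters near the beginning of the word (in fact, governed by the wreath recursion $a = (e,a)\sigma$, $b = (b,a)$), and it changes the length of the encoded word by a bounded amount. So I would first analyze the recursion to get a closed-form description: applying $a$ flips the first letter and then recurses with $a$ on the tail; applying $b$ recurses with $b$ on the tail if the first letter is $0$, and with $a$ on the tail (without flipping the first letter) if the first letter is $1$ — and crucially, $a$ acting deep enough in the tail eventually hits the identity on the $e$-branch. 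Tracking this against the periodic pattern $(01)^\infty$ and against the truncation rule yields an explicit, bounded-memory transformation.

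The concrete steps, in order: (1) Spell out $\overline{v^a}$ and $\overline{v^b}$ as functions of $\overline{v} = x_1 x_2 \cdots x_l$, including the handful of boundary cases where the encoded length increases or decreases by $1$ or $2$ because the disagreement position with $(01)^\infty$ moves; the empty-word case $\overline{(01)^\infty} = \emptyset$ must be handled separately since $(01)^\infty$ itself has nontrivial $a$- and $b$-images (namely $1(01)^\infty$ up to the identification of orbits, etc.), giving specific short target labels. (2) Observe that in all cases the pair $(\overline v, \overline{v^s})$ agrees outside a bounded prefix and a bounded suffix region, so the convolution $\otimes(\overline v, \overline{v^s})$ is obtained from a word over $(\Xd)^2$ by specifying: a bounded-length "prefix behavior", a long "middle" region where the two coordinates are equal (and of a prescribed parity-pattern form relative to $(01)^\infty$), and a bounded "suffix/padding behavior" at the end where one coordinate may run out before the other. (3) Assemble a nondeterministic finite automaton that guesses which case applies, checks the bounded prefix, runs a small parity-counter through the middle enforcing coordinate equality and consistency with the position in $(01)^\infty$, and verifies the bounded suffix. (4) Finally, intersect the resulting language with $\overline{V(\Gamma)}_{pairs}$ — regular by Lemma~\ref{lemma:pairs} applied to the regular language of Lemma~\ref{lem:vertices} — to guarantee that both coordinates are legitimate vertex labels; the intersection of two regular languages is regular, completing the proof.

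The main obstacle I expect is the \textbf{bookkeeping of boundary cases} in step (1): correctly computing how the truncation length of $v^s$ relates to that of $v$ when the action of $s$ creates or destroys agreement with $(01)^\infty$ at the "seam", and making sure the empty-label vertex and its immediate neighbors are treated consistently with the general formula. Once the transformation $\overline{v} \mapsto \overline{v^s}$ is pinned down precisely — essentially a finite-state transducer of bounded lookahead — regularity of its graph is automatic, and the remaining steps (building the acceptor, intersecting with $\overline{V(\Gamma)}_{pairs}$) are routine closure-property arguments. I would therefore devote most of the write-up to a careful case analysis of the action of $a$ and $b$ on encoded labels, possibly organized as two explicit tables or short lemmas (one per generator), and keep the automaton construction terse.
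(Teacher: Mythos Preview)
Your high-level strategy---build a finite automaton over $(X_\diamond)^2$ and intersect with $L_{pairs}$ (regular by Lemma~\ref{lemma:pairs} and Lemma~\ref{lem:vertices})---is exactly what the paper does. The gap is in your step~(2): the claim that $(\overline v,\overline{v^s})$ agree outside a bounded prefix and suffix, with a long ``middle region where the two coordinates are equal,'' is false. Take $n$ odd and $\overline v=1^n\in\overline{V(\Gamma)}$; then $\xi_v=1^{n+1}0101\ldots$, and since $a(1w)=0\,a(w)$ while $a(0w)=1w$, one computes $a(\xi_v)=0^{n+1}11010\ldots$, giving $\overline{v^a}=0^{n+1}1$. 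The convolution $\otimes(1^n,0^{n+1}1)$ has \emph{no} position where the two coordinates agree. So the transformation is not ``local'' in your sense: it is finite-state, but can rewrite unboundedly many letters. (Your description of $a$ as ``flips the first letter and then recurses with $a$ on the tail'' already contains this error: $a$ recurses with $a$ only on input~$1$, and with $e$ on input~$0$.)

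The fix is to replace your ``equal-coordinates middle'' by the Mealy automaton $\mathcal A$ itself, read as an acceptor $\tilde{\mathcal A}$ over $X^2$ (edge $q\stackrel{x|y}{\longrightarrow}q'$ becomes $q\stackrel{(x,y)}{\longrightarrow}q'$). Starting at state $s\in\{a,b\}$, $\tilde{\mathcal A}$ accepts exactly the pairs of finite prefixes of $(\xi_u,\xi_{u^s})$; one then needs only a bounded number of extra transitions and states to handle the $\diamond$-padding at the end, since once the shorter encoded word runs out, the missing letters are forced by $(01)^\infty$ and at most two further steps of $\tilde{\mathcal A}$ are required before reaching state $e$. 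This is precisely the paper's construction of $\mathcal A_a$ and $\mathcal A_b$, and it entirely bypasses your proposed case analysis of $\overline v\mapsto\overline{v^s}$: there is no need to tabulate how the truncation length changes, because the Mealy automaton already encodes the full relation.
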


\begin{proof}
We have to prove that $\otimes\overline{E}_a$ and $\otimes\overline{E}_b$ are regular languages over $(\Xd)^2$. We will show that both of these languages can be obtained as an intersection of a regular language $L_{pairs}$ with regular languages accepted by automata built by modifying the automaton $\A$ generating the group $\G$.

\begin{figure}
\begin{center}
\epsfig{file=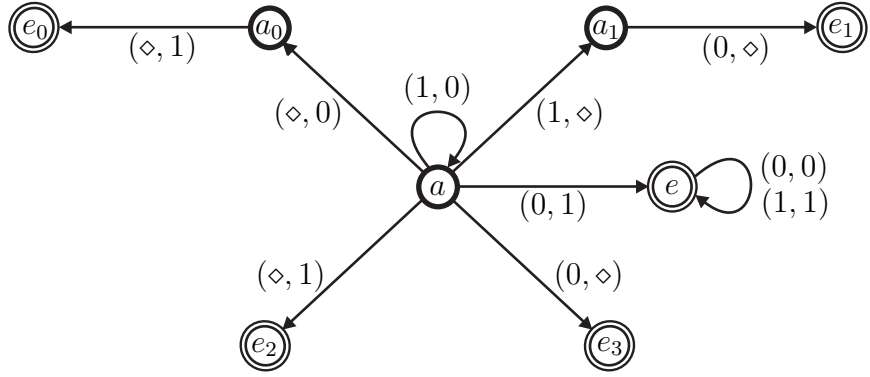}
\end{center}
\caption{Automaton $\A_a$ accepting $L_a$\label{fig:autom_a}}
\end{figure}

We start from $\otimes\overline{E}_a$. Let $L_a$ be a regular language over $(\Xd)^2$ recognized by an automaton $\A_a$ shown in Figure~\ref{fig:autom_a}, where the initial state is $a$ and terminal states are marked by double circles. Of course, states $e_0$ through $e_3$ are equivalent, but we intentionally separate them to make the connection between automata $\A$ and $\A_a$ more clear and to emphasize different cases in the proof. We will show that
\begin{equation}
\label{eqn:E_a}
\otimes\overline{E}_a=L_a\cap L_{pairs},
\end{equation}
thus proving that $\otimes\overline{E}_a$ is regular language over $(\Xd)^2$.

Recall that the vertices of $\Gamma$ are labelled by infinite words over $X$ cofinal with $(01)^\infty$. We will identify the vertices with their labels. By definition of $\overline{\phantom{V}}\colon V(\Gamma)\to X^*$ the preimage of $v\in\overline{V(\Gamma)}$ under $\overline{\phantom{V}}$ is the vertex $\xi_v=x_1x_2x_3\ldots$, where
\[x_i=\left\{\begin{array}{ll}
\text{$i$-th letter of $v$}, & \text{if $i\leq|v|$,}\\
\text{$i$-th letter of $(01)^\infty$}, & \text{if $i>|v|$.}
\end{array}\right.\]

Suppose $(u,v)\in\otimes\overline{E}_a$ for some $u,v\in\Xd^*$ of the same length. Then there are words $u',v'\in\overline{V(\Gamma)}$ such that $(u,v)=\otimes(u',v')$ and such that there is an edge in $\Gamma$ labelled by $a$ from $\xi_{u'}$ to $\xi_{v'}$. Since $(u,v)=\otimes(u',v')$ by the definition of $L_{pairs}$ we immediately get that $(u,v)\in L_{pairs}$.

We will now show that $(u,v)$ is in $L_a$, i.e. accepted by $\A_a$. By definition of the adjacency in $\Gamma$, we can read an infinite sequence of pairs of letters in $X$ corresponding to the pair $(\xi_{u'},\xi_{v'})$ of infinite words over $X$ by following the transitions in automaton $\tilde A$ over $X^2$ depicted in Figure~\ref{fig:929acceptor} starting from state $a$.

\begin{figure}
\begin{center}
\epsfig{file=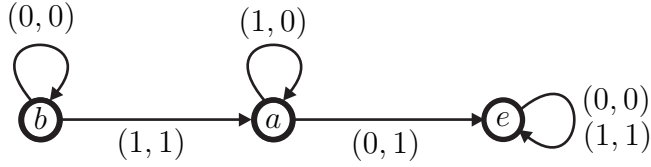}
\end{center}
\caption{Modified automaton $\tilde\A$\label{fig:929acceptor}}
\end{figure}

\begin{figure}
\begin{center}
\epsfig{file=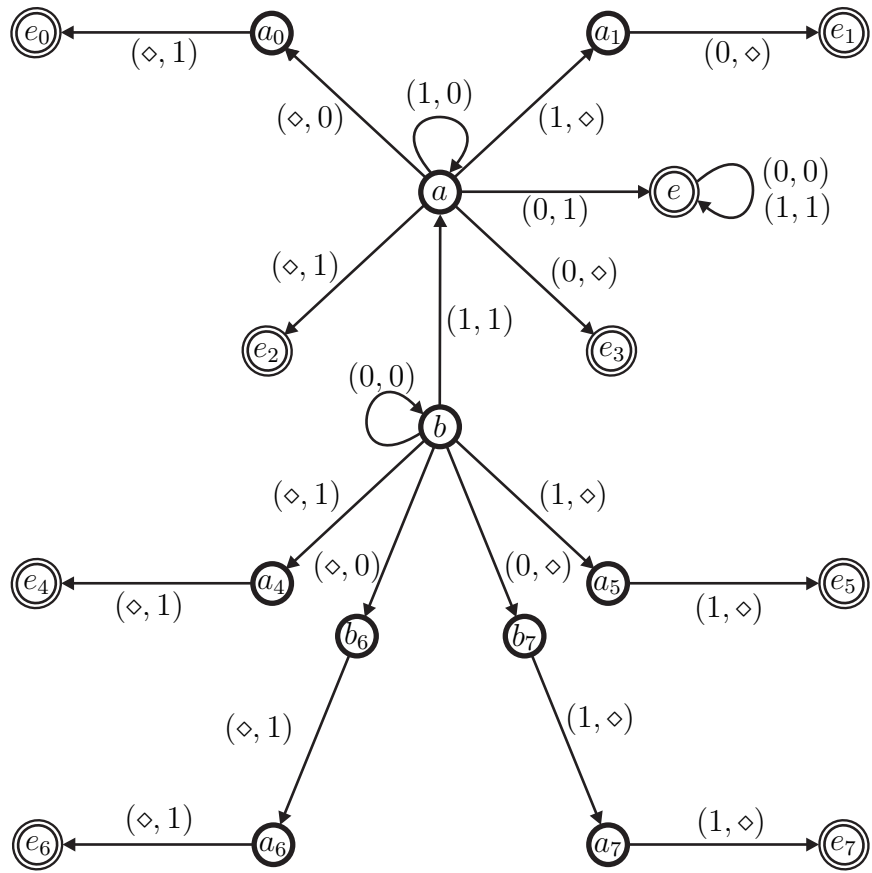}
\end{center}
\caption{Automaton $\A_b$ accepting $L_b$\label{fig:autom_b}}
\end{figure}

Suppose first, that both $u=x_1x_2\ldots x_k$ and $v=y_1y_2\ldots y_k$ do not contain $\diamond$, and, hence, $u'=u$ and $v'=v$. In this case we can disregard all transitions in $\A_a$ with labels containing $\diamond$. After removing all such transitions and corresponding states from $\A_a$ we get an automaton equivalent to automaton $\tilde\A$ with initial state $a$. In particular, we read the same words over $X^2$ along paths in these automata. Therefore, we can read $(u,v)$ along the path in $\A_a$. The question is only if we end up in the accepting state of $\A_a$.

Since the pair consisting of two empty words is not accepted by $\A_a$ and is not in $\otimes\overline{E}_a$, we can assume that $k>0$. Observe that both $x_k$ and $y_k$ must be different from the $k$-th letter in $(01)^\infty$ since otherwise $u$ or $v$ would not be in $\overline{V(\Gamma)}$. In particular, we get that $x_k=y_k$. But this means that we have to be in the accepting state $e$ of $\A_a$ after reading $(u,v)$ starting from state $a$. Therefore, $(u,v)$ is accepted by $\A_a$.

Now assume that $|u|=|u'|>|v'|$. Then $v$ has a form $y_1y_2\ldots y_l\diamond^{k-l}$ for some $0\leq l<k$, and the pair of infinite words $(\xi_u,\xi_{v'})$ can be read along the path in $\tilde A$.

Note that by definition of $\overline{\phantom{V}}\colon V(\Gamma)\to X^*$  the letter $x_{k}$ of $u$ is different from the $k$-th symbol of $(01)^\infty$, which coincides with the $k$-th symbol of $\xi_{v'}$. Therefore, while reading a pair containing $x_k$ by $\tilde\A$ with initial state $a$ we must be at state $a$. But since as soon as we leave state $a$ we never come back, it follows that we must remain in state $a$ after reading $k$ first pairs of letters in $(\xi_u,\xi_{v'})$. Consequently, we will be in the state $a$ after reading $(x_1x_2\ldots x_l,y_1y_2\ldots y_l)$ along the path in $\A_a$.

After reading the first $l$ pairs of letters of $(u,v)$ by $\A_a$, the next pair we read is $(x_{l+1},\diamond)$. Consider two cases:
\begin{enumerate}
\item If $x_{l+1}=0$, then since we are in the state $a$ in $\tilde\A$ after reading first $l$ pairs, we shift to the state $e$ in $\tilde A$ after reading the pair containing $x_{l+1}$. After this, the automaton $\tilde\A$ will accept only pairs of identical letters. In particular, we get that $\xi_{u}$ coincides $\xi_v$, and thus with $(01)^\infty$ by construction of $\xi_v$, at positions starting from $l+2$. Therefore, $x_{l+1}$ must be different from the $(l+1)$-st letter in $(01)^\infty$. Hence, $u=x_1x_2\ldots x_l0$, and after reading $(u,v)=(x_1x_2\ldots x_l0,y_1y_2\ldots y_l\diamond)$ starting from the state $a$ the automaton $\A_a$ will shift to the accepting state $e_3$ and will accept $(u,v)$.

\item If $x_{l+1}=1$, then while reading the $(l+1)$-st letter of $(u,v)$ by automaton $\tilde\A$ we must stay at the state $a$, because we have to follow the arrow, the first coordinate of whose label is 1. But as there is just one arrow whose label has the first coordinate 1 going out of state $a$, this determines uniquely the second coordinate of this label, which must be 0. Therefore, the $(l+1)$-st letter of $\xi_{v'}$, and thus of $(01)^\infty$ is 0. But this implies that the $(l+2)$-nd letter in $(01)^\infty$ is $1$. Note that this is precisely the only place where we need that $\omega=(01)^\infty$, because we need the next letter of $\omega$ to be completely determined by the previous one, so we have to choose $\omega$ from $0^\infty$, $1^\infty$, $(01)^\infty$ and $(10)^\infty$.

Now if $(l+2)$-nd letter of $(01)^\infty$, and thus of $\xi_{v'}$, is 1, in the automaton $\tilde A$ we have to follow the arrow going out of a state $a$, whose label's second coordinate is $1$. There is again exactly one such arrow, that ends up in the state $e$ and whose label is $(0,1)$. Thus, $x_{l+2}=0$, and $\xi_u$ and $(01)^\infty$ coincide at positions $l+3$ and higher. Therefore, $u=x_1x_2\ldots x_{l}10$, and after reading $(u,v)=(x_1x_2\ldots x_l10,y_1y_2\ldots y_l\diamond\diamond)$ starting from the state $a$ the automaton $\A_a$ will shift to the accepting state $e_1$ and will accept $(u,v)$.
\end{enumerate}

The case when $|v|=|v'|>|u'|$ is analogous. In this case after reading $(u,v)$ the automaton $\A_a$ will end up either in the state $e_0$ or in the state $e_2$.
Therefore, each word in $\otimes\overline{E}_a$ is accepted by $A_a$.

Conversely, if a word $(u,v)$ over $(\Xd)^2$ is accepted by $\A_a$, then after reading this word the automaton has to shift to one of the five terminal states. Consider all cases separately:
\begin{enumerate}
\item If we end up in the state $e_0$, then $(u,v)=(1^n\diamond\diamond,0^n01)$ for some $n\geq0$. In the case $n$ is even, the word $0^n01$ is not in $\overline{V(\Gamma)}$, and thus $(u,v)\notin L_{pairs}$. If $n$ is odd, then $(u,v)=\otimes(1^n,0^n01)$, where both $1^n$ and $0^n01$ are elements of $\overline{V(\Gamma)}$ corresponding to vertices \[\xi_u=1^n1(01)^\infty\]
and
\[\xi_v=0^n011(01)^\infty\]
that are connected by the edge in $\Gamma$ labelled by $a$, because one can read $(\xi_u,\xi_v)$ along the path in the automaton $\tilde\A$.
\item The cases when we end up in states $e_1$, $e_2$ and $e_3$ are treated in the same way.
\item If we end up in the state $e$, then both $u$ and $v$ do not contain $\diamond$. We will be able to read $(\xi_u,\xi_v)$ along the path in the automaton $\tilde A$. So there is an edge from $\xi_u$ to $\xi_v$ in $\Gamma$. Therefore, in this situation, $(u,v)\in L_{pairs}$, if and only if $(u,v)\in\otimes\overline{E}_a$.
\end{enumerate}

Thus, each word that is accepted by $\A_a$ and is in $L_{pairs}$ must be in $\otimes\overline{E}_a$. This finishes the proof of the equality~\eqref{eqn:E_a}.

Similarly to $L_a$ we define $L_b$ to be a regular language recognized by automaton $\A_b$ depicted in Figure~\ref{fig:autom_b}. Similarly to the definition of an automaton $\A_a$, states $e_0$ through $e_7$ are equivalent, but we intentionally separate them to make the diagram of an automaton more clear. The proof that
\begin{equation*}
\otimes\overline{E}_b=L_b\cap L_{pairs},
\end{equation*}
is analogous to the proof of equality~\eqref{eqn:E_a}.
\end{proof}

\begin{proof}[Proof of Theorem~\ref{thm:main}]
The main Theorem~\ref{thm:main} now follows by the definition of an automatic graph from Lemma~\ref{lem:vertices} and Lemma~\ref{lem:edges}.
\end{proof}
\bigskip

\noindent{\textbf{\large References}
\bigskip

\bibliographystyle{plain}
%\bibliography{../../../mylib}

%  \bibliographystyle{elsarticle-num}
%  \bibliography{../../../mylib}

\def\cprime{$'$} \def\cprime{$'$} \def\cprime{$'$} \def\cprime{$'$}
  \def\cprime{$'$} \def\cprime{$'$} \def\cprime{$'$} \def\cprime{$'$}
  \def\cprime{$'$} \def\cprime{$'$} \def\cprime{$'$} \def\cprime{$'$}
  \def\cprime{$'$}

\end{document}